\newtheorem{thm}{Theorem}
\newtheorem{lemma}{Lemma}
\begin{document}

\begin{frontmatter}


\title{On an optimal potential of Schr\"odinger operator with prescribed $m$ eigenvalues}





\author[im,im_Br]{Y.Sh.~Ilyasov\corref{cor1}}\ead{ilyasov02@gmail.com}
\author[im]{N.F.~Valeev}\ead{valeevnf@yandex.ru}

\cortext[cor1]{Corresponding author}

\address[im]{Institute of Mathematics of UFRC RAS, 45008, 112, Chernushevskogo, Ufa, Russia;}
\address[im_Br]{Instituto de Matem\'atica e Estat\'istica.
	Universidade Federal de Goi\'as,	74001-970, Goiania, Brazil}

\begin{abstract}

The purpose of this paper is twofold: firstly, we present a new type of relationship between inverse problems and nonlinear differential equations. Secondly, we introduce a new type of inverse spectral problem, posed as follows: for a priori given potential $V_0$ find  the closest  function $\hat{V}$ such that $m$ eigenvalues of one-dimensional space  Schrodinger operator with potential $\hat{V}$  would coincide with the given values $ E_1 $,$ \ldots $, $ E_m \in \mathbb {R} $. In our main result, we prove the existence of a solution to this problem, and more importantly, we show that such solution can be directly found by solving a system of nonlinear differential equations.

\end{abstract}

\begin{keyword}
Schr\"odinger equation; inverse spectral problem; system of nonlinear differential equations 


\end{keyword}

\end{frontmatter}






\section{Introduction}
The inverse problems are some of the most paramount problems in science and are present in everything around us. Our sensory contact with the world around us depends on an intuitive solution of an inverse problem. The shape, size, and color of external objects are inferred from their scattering and absorption of light as detected by our eyes. At the same time, the modern theory of vision assumes that perception is not only passive reception of signals, but it is also shaped by the learning,  including processing information associated with already existing concepts and knowledge of a person. 
Evidently, the case, whereby some preliminary information about the unknown parameters of the model exists, is typical for applied inverse problems. This raises the following \textit{inverse optimal problem}: 
\par
\textit{Determine the model parameters $\hat{F}$ which are the closest to the a priori given dates $F_0$ and which produce the observed measurements $S$.}

Below we will show on an example of Schr\"odinger equation that this problem is solvable. However, before this let us mention that this problem is related to the following direct problem:
\par
\textit{How to transform a given system $F_0$ under minimal changes to a new one $\hat{F}$ with pre-set properties $S$?}
\par\noindent
This problem, to a certain extent, can be defined as a problem of minimum fine-tuning, which is understandably important in itself and arises in many applications 
(see e.g. \cite{Zakhariev}).





\section{Main result}
  We illustrate the inverse optimal problem by considering  the time - independent Schr\"odinger wave equation one space dimension 
\begin{equation} \label{eq:S}
H_V\psi :=-\psi''+V(x)\psi=E \psi\,\,\,\,
\end{equation}
in finite interval $(0,L)$ subject to the zero boundary condition
\begin{eqnarray}
	\psi(0)=\psi(L)=0.
	\end{eqnarray}
 We assume that  $V \in L^2(\Omega)$. 
 Under these conditions   $H_V$ defines a self-adjoint operator on the Hilbert space  $L^2(0,L)$ (see, e.g., \cite{zetl}), so that its spectrum consists of an infinite sequence of eigenvalues $\sigma_p(H_V):=\{E_i(V) \}_{i=1}^{\infty}$ which can be ordered as follows  $E_1(V)<E_2(V)< \ldots$.
Furthermore, to each eigenvalue $E_k(V)$ corresponds an unique (up to a normalization constant) eigenfunction $\phi_k(V)$  which has exactly $k-1$ zeros in $(0,L)$. In what follows, we shall always suppose that  $\|\phi_k(V)\|_{L^2}=1$, $k=1,2,...$.
Here and what follows, we denote by $\left\langle \cdot, \cdot \right\rangle $ and $\|\cdot\|$   the scalar product and the norm in  $L^2:=L^2(0,L)$; $W^{1,2}(0,L)$, $W^{2,2}(0,L)$ are usual Sobolev spaces and 
 $W^{1,2}_0:=W^{1,2}_0(0,L)$ is the closure of $C^\infty_0(0,L)$ in the norm
$
\|\psi\|_{1}=\left(\int^L_0 |\nabla \psi |^2 dx\right )^{1/2}
$.

Let $m\geq 1$. We  study the following $m$-parameter inverse optimal spectral problem:

\medskip

\par\noindent
(P):\,\,\textit{For a given $E_1,\ldots, E_m \in \mathbb{R}$ and $V_0 \in L^2:=L^2(0,L)$, find   a potential  $\hat{V} \in L^2$ such that}
\begin{align}
	&\bullet ~E_1=E_1(\hat{V}),\ldots, E_m=E_m(\hat{V}),\nonumber \\
	&\bullet ~\|V_0-\hat{V} \|^2_{L^2}=\min_{ V \in L^2}\left\{\|V_0-V\|^2_{L^2}: E_k=E_k(V), k=1,\ldots,m \right\}.\label{minProb}
\end{align}

It turns out that  this problem is related to the solving the following system of nonlinear  equations:
\begin{equation}
\tag{$\mathcal{E}$}
\label{eq:Nonl}
\left\{
\begin{aligned}
  -u_i''&+V_0 u_i=E_i u_i+\sum_{j=1}^m \sigma_j u_j^{2}u_i,~~&&i=1,2,...,m, \\
  ~~u_i&(0)=u_i(L)=0,~~ &&i=1,2,...,m. \\
\end{aligned} \right.
\end{equation}
where $\sigma_i\in\{0,+1, -1\}$, $i=1,\ldots,m$. 

%
\begin{thm}\label{thm1}
Let $E_1,\ldots, E_m \in \mathbb{R}$ and $V_0 \in L^2$ are given, $m\geq 1$. 
Then there exists  a solution  $\hat{V}$ of inverse optimal spectral problem $(P)$.

Furthermore,  $\hat{V}$ is expressed in terms of solution of system  $\eqref{eq:Nonl}$ that is \eqref{eq:Nonl} possesses a non-zero weak  solution $(\hat{u}_1,..., \hat{u}_m) \in  (C^2(0,L)\cap C^1[0,L])^m$ for some constants $\sigma_1,...,\sigma_m \in \{0,+1, -1\}$   so that
 \begin{equation}\label{OPEN}
	\hat{V}(x)=V_0(x)-\sum_{j=1}^m \sigma_j \hat{u}_j^{2}(x).
\end{equation}
Moreover, in the case  $ E_{i'}\neq E_{i'}(V_0)$ for some $i' \in \{1,\ldots m\}$,  $\sigma_1,...,\sigma_m $  are not  equal simultaneously to zero, i.e.,  $\sum_{j=1}^m|\sigma_j|\neq 0$.
\end{thm}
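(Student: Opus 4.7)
The plan is to combine the direct method of the calculus of variations with a Lagrange multiplier argument: first I would prove that the infimum in \eqref{minProb} is actually attained on the admissible set $\mathcal{M}=\{V\in L^2 : E_k(V)=E_k,\ k=1,\ldots,m\}$, and then I would use the classical Hellmann--Feynman formula for the derivative of a simple eigenvalue with respect to the potential in order to read off the representation \eqref{OPEN} and, via rescaling, the nonlinear system \eqref{eq:Nonl}.

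For existence, let $\{V_n\}\subset\mathcal{M}$ be a minimizing sequence. The triangle inequality yields $\|V_n\|\le\|V_0\|+\|V_0-V_n\|$, so $\{V_n\}$ is bounded in $L^2$ and we may extract a weakly convergent subsequence $V_n\rightharpoonup\hat V$. Since $V\mapsto \|V_0-V\|^2$ is weakly lower semicontinuous, it suffices to show $\hat V\in\mathcal{M}$. Testing the eigenvalue equation $-(\phi_k^n)''+V_n\phi_k^n=E_k\phi_k^n$ against $\phi_k^n:=\phi_k(V_n)$ and using the 1D Sobolev embedding $W^{1,2}_0\hookrightarrow L^\infty$ together with a Gagliardo--Nirenberg interpolation, one obtains a uniform bound on $\|\phi_k^n\|_{W^{1,2}_0}$ depending only on $|E_k|$ and $\sup_n\|V_n\|$. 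Rellich compactness provides a subsequence with $\phi_k^n\to\hat\phi_k$ strongly in $L^\infty$ and $\|\hat\phi_k\|=1$; the product $V_n\phi_k^n$ then converges weakly in $L^1$ to $\hat V\hat\phi_k$, so passing to the limit gives $-\hat\phi_k''+\hat V\hat\phi_k=E_k\hat\phi_k$. Sturm's oscillation theorem combined with the uniform convergence preserves the nodal count of $\hat\phi_k$, hence $E_k$ is indeed the $k$-th eigenvalue of $H_{\hat V}$ and $\hat V\in\mathcal{M}$.

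For the characterization, each simple eigenvalue $E_k(V)$ is Fréchet differentiable at $\hat V$ with derivative $\phi_k^2(\hat V)=:\hat\phi_k^2\in L^2$. The Sturmian structure (distinct nodal sets of the $\hat\phi_k$) ensures that $\{\hat\phi_k^2\}_{k=1}^m$ are linearly independent in $L^2$, giving the needed constraint qualification, so the Lagrange multiplier rule provides $\lambda_1,\ldots,\lambda_m\in\mathbb{R}$ with
$$\hat V - V_0 + \sum_{k=1}^{m}\lambda_k\hat\phi_k^2 = 0.$$
Setting $\sigma_k:=\operatorname{sgn}\lambda_k\in\{-1,0,+1\}$ and $\hat u_k:=|\lambda_k|^{1/2}\hat\phi_k$, substitution into $-\hat\phi_k''+\hat V\hat\phi_k=E_k\hat\phi_k$ produces exactly the system \eqref{eq:Nonl}, and \eqref{OPEN} is then immediate. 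Standard bootstrap regularity (using $\hat u_j\in W^{2,2}\hookrightarrow C^1[0,L]$ and that the nonlinear term $\sum_j\sigma_j\hat u_j^2\hat u_k$ inherits the smoothness of its factors) yields the claimed regularity. Finally, if $\sum_j|\sigma_j|=0$, then \eqref{OPEN} forces $\hat V=V_0$, so $E_{i'}(V_0)=E_{i'}(\hat V)=E_{i'}$, contradicting the assumption $E_{i'}\ne E_{i'}(V_0)$.

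The main obstacle is the weak $L^2$-closedness of $\mathcal{M}$, and specifically the fact that $E_k$ remains the $k$-th eigenvalue (not merely some eigenvalue) of $H_{\hat V}$. This index-preservation step rests on upgrading weak convergence of the potentials to strong $L^\infty$-convergence of the eigenfunctions via the one-dimensional Sobolev embedding, and then invoking Sturm's oscillation theorem; both ingredients would fail in higher dimensions, which is why the argument is intrinsically one-dimensional. A secondary difficulty is verifying the linear independence of $\{\hat\phi_k^2\}$ required for the Lagrange rule; although expected from Sturmian theory, it needs a small dedicated argument, and in its absence one would have to retreat to the Fritz John form of the multiplier rule.
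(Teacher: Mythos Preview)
Your approach is essentially the paper's: direct method plus weak compactness for existence, eigenfunction compactness and Sturm oscillation for closedness of $\mathcal{M}$, then Lagrange multipliers together with the linear independence of $\{\hat\phi_k^2\}$ (which the paper proves in an Appendix) and the rescaling $\hat u_k=|\lambda_k|^{1/2}\hat\phi_k$. One detail you gloss over that the paper makes explicit: uniform (or even $C^1$) convergence $\phi_k^n\to\hat\phi_k$ only shows that $\hat\phi_k$ has \emph{at most} $k-1$ interior zeros, i.e.\ $r_k\le k$ where $E_k=E_{r_k}(\hat V)$; to force $r_k=k$ the paper passes the orthogonality $\langle\phi_i^n,\phi_k^n\rangle=0$ to the limit, so the $\hat\phi_k$ are mutually orthogonal and the $r_k$ distinct, which together with $r_k\le k$ gives $r_k=k$.
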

Notice that if $E_i=E_i(V_0)$,  $\forall i=1,\ldots, m$, then the solution of $(P)$ is \textit{trivial}, i.e., $\hat{V}=V_0$ and $\sum_{j=1}^m|\sigma_j|=0$ in \eqref{OPEN}.

\section{Proof of the main result}

In the next lemma, we derive a key formula in our approach
\begin{lemma}\label{lem1}
For $k\geq 1$, the map 	$E_k(\cdot): L^2 \to \mathbb{R}$  is continuously
differentiable with the Fr\'echet-derivative
\begin{equation}\label{eq:Val}
	D_VE_k(V)(h)=\frac{1}{\|\phi_k(V)\|^2}\left\langle \phi_k^2(V), h\right\rangle, ~~\forall \,V ,h \in L^2.
\end{equation}
\end{lemma}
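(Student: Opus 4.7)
\textbf{Proof proposal for Lemma~\ref{lem1}.}

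The plan is to view $E_k(V)$ as an implicit function of $V$ and apply the implicit function theorem in a suitable Banach space setting. Let $V_0\in L^2$ be fixed. Since $V_0$ is only in $L^2$, I first note that any weak eigenfunction $\phi_k(V_0)\in W^{1,2}_0$ automatically lies in $L^\infty(0,L)$ by the one-dimensional Sobolev embedding, hence $V_0\phi_k(V_0)\in L^2$ and elliptic regularity upgrades $\phi_k(V_0)$ to $W^{2,2}\cap W^{1,2}_0$. Consider the map
\begin{equation*}
F:L^2\times\mathbb{R}\times (W^{2,2}\cap W^{1,2}_0)\to L^2\times\mathbb{R},\qquad
F(V,E,\phi)=\bigl(-\phi''+V\phi-E\phi,\;\langle\phi,\phi_k(V_0)\rangle-1\bigr).
\end{equation*}
The normalization $\langle\phi,\phi_k(V_0)\rangle=1$ is used only as a local phase-fixing device and, together with $\|\phi_k(V_0)\|=1$, is consistent at the base point.

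The key step is to verify that the partial derivative $D_{(E,\phi)}F$ at $(V_0,E_k(V_0),\phi_k(V_0))$ is an isomorphism. Explicitly, one must solve, for given $(f,c)\in L^2\times\mathbb{R}$,
\begin{equation*}
(H_{V_0}-E_k(V_0))\,\delta\phi-\delta E\cdot\phi_k(V_0)=f,\qquad \langle\delta\phi,\phi_k(V_0)\rangle=c.
\end{equation*}
Because $E_k(V_0)$ is a simple eigenvalue (by standard Sturm--Liouville theory for Dirichlet problems on a bounded interval), $H_{V_0}-E_k(V_0)$ is an isomorphism from $\{\phi_k(V_0)\}^\perp\cap (W^{2,2}\cap W^{1,2}_0)$ onto $\{\phi_k(V_0)\}^\perp\subset L^2$. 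Testing the first equation against $\phi_k(V_0)$ determines $\delta E=-\langle f,\phi_k(V_0)\rangle$; the component of $\delta\phi$ perpendicular to $\phi_k(V_0)$ is then recovered by inverting $H_{V_0}-E_k(V_0)$ on the orthogonal complement, and the parallel component is fixed by the constraint $c$. The bounds are uniform, so $D_{(E,\phi)}F$ is bijective with bounded inverse. I expect this injectivity/surjectivity argument to be the main technical obstacle, as it is the place where the one-dimensional simplicity of the spectrum is essential.

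The implicit function theorem then produces $C^1$ maps $V\mapsto E_k(V)$ and $V\mapsto\phi_k(V)$ defined on an $L^2$-neighborhood of $V_0$, with values in $\mathbb{R}$ and $W^{2,2}\cap W^{1,2}_0$ respectively. To extract the derivative formula, differentiate the identity $H_V\phi_k(V)=E_k(V)\phi_k(V)$ in the direction $h\in L^2$, obtaining
\begin{equation*}
h\,\phi_k(V)+(H_V-E_k(V))\,D_V\phi_k(V)[h]=D_VE_k(V)[h]\,\phi_k(V).
\end{equation*}
Taking the $L^2$-scalar product with $\phi_k(V)$ and using self-adjointness together with $H_V\phi_k(V)=E_k(V)\phi_k(V)$ makes the middle term vanish, which yields
\begin{equation*}
D_VE_k(V)[h]=\frac{\langle\phi_k^2(V),h\rangle}{\|\phi_k(V)\|^2},
\end{equation*}
i.e.\ formula~\eqref{eq:Val}.

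Finally, continuity of $V\mapsto D_VE_k(V)$ in the operator norm reduces to continuity of $V\mapsto\phi_k^2(V)$ in $L^2$, which follows from the continuity of $V\mapsto\phi_k(V)$ from $L^2$ into $W^{2,2}\hookrightarrow L^\infty$ produced by the implicit function theorem. Iterating this construction over all $V_0$ gives the global $C^1$ statement claimed in the lemma.
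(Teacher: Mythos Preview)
Your argument is correct and complete in all essential respects; the only point you leave implicit is that the branch produced by the implicit function theorem coincides with the globally ordered $k$-th eigenvalue $E_k(V)$ for $V$ near $V_0$, but this follows immediately from the simplicity of the Dirichlet spectrum and the continuity of eigenvalues under $L^2$-perturbations, so it is not a genuine gap.

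Your route, however, differs from the paper's. The paper does not set up an implicit function theorem argument at all: it quotes M\"oller--Zettl \cite{ZettlM} to obtain Fr\'echet differentiability of $E_k$ together with formula~\eqref{eq:Val} directly, and then invokes the analyticity of the map $V\mapsto\phi_k(V)$ from $L^2$ into $W^{2,2}$ established in P\"oschel--Trubowitz \cite{Poschel}; continuity of $D_VE_k$ is then read off from the embedding $W^{2,2}\hookrightarrow L^4$. Your approach is more self-contained---it requires no external black boxes beyond the abstract implicit function theorem and standard Sturm--Liouville facts---and it delivers the $C^1$ dependence of both $E_k$ and $\phi_k$ in one stroke, whereas the paper outsources these two pieces to separate references. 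The paper's version is shorter on the page and, via the analyticity citation, actually yields a stronger regularity statement than it needs; your version is longer but transparent, and the $L^\infty$ embedding you use is just as effective as the paper's $L^4$ embedding for the continuity of $V\mapsto\phi_k^2(V)$ in $L^2$.
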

\begin{proof} Since $E_k(V)$ is isolated,  Corollary 4.2 from \cite{ZettlM} implies that
$E_k(V)$ is Fr\'echet differentiable and \eqref{eq:Val} holds. By the analyticity property (see  \cite{Poschel}, page 10), the map $\phi_k(\cdot): L^2 \to W^{2,2}(0,L)$ is analytic.
Due to the Sobolev theorem,
the embedding $W^{2,2}(0,L) \subset L^4(0,L)$ is continuous. Hence the map $\phi_k(\cdot): L^2 \to L^4(0,L)$ is continuous and therefore the norm of the derivative $D_VE_1(V)$ continuously depends on $q \in L^2$. This implies  that $E_k(V)$ is continuously differentiable in $L^2$.
\end{proof}

Denote 
$$
\hat{Q}=\min\left\{\|V_0-V\|^2:~~ E_k=E_k(V),~ k=1,\ldots,m,~~V \in L^2\right\}.
$$
Let $(V_j)\subset L^2$ be a minimizing sequence of \eqref{minProb}.
Due to coerciveness of $||V_0-V||^2$ on  $L^2$,  the sequence $(V_j)$ is bounded in $L^2$, and therefore the Banach-Alaoglu theorem yields that there exists a subsequence which we again denote by $(V_j)$ such that $||V_0-V_j|| \to 
||V_0-\hat{V}||$ and $
V_j \rightharpoondown \hat{V}
$
weakly in  $L^2$ for some $\hat{V} \in L^2$.

%

Let $i=1,\ldots,m$. Consider the sequence of the eigenfunctions $(\phi_i(V_j))$. The boundedness of  $(V_j)$ in $L^2$ and the assumption $E_i(V_j)\equiv E_i$ for $j=1,\ldots,$ implies (see \cite{ilValUMJ}) that the sequences $(\phi_i(V_j))$ is bounded in $W^{2,2}(0,L)$. From this and by the Sobolev embedding theorem there exists a subsequence which we again denote by $(\phi_i(V_j))$, $i=1,\ldots,m$  such that
\begin{equation} \label{eq:S5}
\phi_i(V_j) \to \phi_i^*~~~\mbox{as}~~~j\to +\infty, ~~\mbox{strongly in}~~W^{1,2}(0,L) \cap C^1[0,L],
\end{equation}
 for some $\phi_i^*\in W^{1,2}_0(0,L) \cap C^1[0,L]$. Notice that, since $\|\phi_i(V_j))\|=1$,   $\forall j=1,2,...$, it follows that $\phi_i^*\neq 0$.
Moreover,
\begin{align*}\label{eq:S6}
\phi_i(V_j) = E_i(V_j) \int_0^L G_0&(x,\xi)(\phi_i(V_j)(\xi)-\phi^*_i(\xi))d\xi \\
&-\int_0^L G_0(x,\xi) V_j(\xi) (\phi_i(V_j)(\xi)-\phi^*_i(\xi))d\xi+ \nonumber\\
+&E_i(V_j) \int_0^L G_0(x,\xi)\phi^*_i(\xi)d\xi -\int_0^L G_0(x,\xi) V_j(\xi) \phi^*_i(\xi)d\xi.
\end{align*}
where $G_0(x,\xi)$  is the integral kernel of operator $H_V^{-1}|_{V=0}$.
In view of that $G_0 \in C[0,L] \times C[0,L]$, the strong convergences \eqref{eq:S5} and the weak convergence $V_j \rightharpoondown \hat{V}$ in $L^2$ imply
\begin{equation} \label{eq:S8}
 \phi^*_i(x)=E_i \int_0^L G_0(x,\xi)\phi^*_i(\xi)d\xi -\int_0^L G_0(x,\xi) \hat{V}(\xi) \phi^*_i(\xi)d\xi,
\end{equation}
or in the equivalent form
\begin{equation} \label{eq:S7}
H_{\hat{V}}\phi^*_i(x)=E_i \phi^*_i(x).
\end{equation}
Thus $(E_i, \phi^*_i)$  coincides with some eigenpairs of the operator $H_{\hat{V}}$, that is there exist  $r_1,\ldots, r_m \in \mathbb{N}$ such that
\begin{equation}
	E_i=E_{r_i}(\hat{V}),~~~~\phi^*_i=\phi_{r_i}(\hat{V}), ~~~i=1,\ldots,m,
\end{equation}
Let us show that
$ r_i=i$ for every $i=1,\ldots,m$.
By the Sturm comparison theorem (see e.g.,\cite{zetl}) for each $j=1,2, ... $, the eigenfunction $\phi_i(V_j)(x)$, $i=1,\ldots,m$ has precisely $i-1$ roots in $(0,L)$.
This and the  convergences \eqref{eq:S5} in $C^1[0,1]$ yield that the limit function $\phi^*_i$, $i=1,\ldots, m$ may has at most $i-1$ roots in $(0,L)$.
Hence  $r_i \leq i-1$, $i=1,\ldots, m$. 
Since $\left\langle \phi_i(V_j),\phi_k(V_j)\right\rangle=0$ for all $j=1,2,...$ and $i\neq k$, by passing to the limit we deduce that $\left\langle \phi_i^*,\phi_k^*\right\rangle=0$ for any $i,k \in \{1,\ldots,m\}$ such that $i\neq k$. Thus $\phi_i^*\neq \phi_k^*$ and since  $r_i \leq i-1$,  $i=1,\ldots, m$, we obtain  $ r_i=i$ for every $i=1,\ldots,m$. Thus 
$E_i=E_i(\hat{V})$,~ $\forall i=1,\ldots,m$ that is $\hat{V}$ is an admissible point for minimization problem \eqref{minProb}. From this and since by  the weak  convergence  $V_j \rightharpoondown \hat{V}$ in $L^2$ one has 
$
||V_0-\hat{V}|| \leq \liminf_{j\to \infty}||V_0-V_j||=\hat{Q}
$,we conclude that $||V_0-\hat{V}||= \hat{Q}$. Thus  $\hat{V}$ is a minimizer of \eqref{minProb}.

Let us show \eqref{OPEN}. In view of that $E_k(\cdot): L^2 \to \mathbb{R}$, $k=1,\ldots, m$  are continuously
differentiable functions, we may apply the Lagrange multiplier rule which yields the equality
\begin{equation}
	\mu_0 D_V(||V_0-\hat{V}||^2)(h)+\sum_{j=1}^{m}\mu_j D_VE_j(\hat{V})(h) =0,~~ \forall h \in L^2,
\end{equation}
for some $\mu_0,\ldots, \mu_m \in \mathbb{R} $ such that $|\mu_0|+\ldots+|\mu_m|\neq 0$. Thus by \eqref{eq:Val} we deduce
\begin{equation}
	\int_\Omega \left(-2\mu_0  (V_0-\hat{V})+\sum_{j=1}^{m}\mu_j\phi_j^2(\hat{V})\right) h\, dx  =0, \,\, \forall h \in L^2,
\end{equation}
where $\|\phi_2(\hat{V})\|=1$. Hence,
$$
2\mu_0  (V_0-\hat{V})=\sum_{j=1}^{m}\mu_j\phi_j^2(\hat{V})~~\mbox{a.e. in}~~(0,L).
$$
Observe that
 $\mu_0 \neq 0$. Indeed, if $\mu_0=0$, then $\sum_{j=1}^{m}\mu_j\phi_j^2(\hat{V})(x)=0$ a.e. in $(0,L)$. However this is impossible since $(\phi_j^2(V))_{j=1}^m$, for any $m\geq1$, forms a system of independent  functions in $[0,L]$ (see below Appendix).   Suppose that $\sum_{j=1}^{m}|\mu_j|=0$. Then $V_0=\hat{V}$  a.e. in $(0,L)$ and consequently $E_j=E_j(V_0)$ for every $j=1,\ldots,m$  which contradicts our assumption. Thus $\sum_{j=1}^{m}|\mu_j|\neq 0$ and 
\begin{equation*}
	\hat{V}(x)=V_0(x)+\sum_{j=1}^m \mu_j \phi_j^{2}(\hat{V})~~\mbox{a.e. in}~~\Omega.
\end{equation*}
 Substituting this into the equalities
$$
 -\phi_i''(\hat{V})+\hat{V} \phi_i(\hat{V})=E_i \phi_i(\hat{V}),~~i=1,\ldots,m,
$$
we obtain  
\begin{equation*}\label{eq:phi}
- \phi_i''(\hat{V})+V_0 \phi_i(\hat{V})=E_i \phi_i(\hat{V})-(\sum_{j=1}^m \mu_j \phi_j^{2}(\hat{V}))\phi_i(\hat{V}),~~j=1,\ldots,m.
\end{equation*}
Thus, the functions $\hat{u}_i=|\mu_i|^{1/2}\phi_i(\hat{V})$, $i=1,\ldots, m$ satisfy to  system \eqref{eq:Nonl} and we have proved \eqref{OPEN} .

\section{Conclusion and remarks}

The first part of Theorem \ref{thm1} only demonstrates the mere existence of a solution. Indeed,   one should not expect in the general to find the explicit form of the functions $E_k(V)$, $k=1,\ldots $. However, equation \eqref{eq:Nonl}  can be solved, at least numerically, and thus the optimal potential in $(P)$ can de obtained by  \eqref{OPEN} (up to know the values of the constants ($\sigma_i$)).

The one-parameter inverse optimal spectral problem, that is when only one eigenvalue $E_m$ is predetermined in (P), has been  studied in our recent papers \cite{ilValMatZam, ilVal, ilValUMJ} where the existence of an inverse optimal potential $\hat{V}$ has been proven. Furthermore, in this case, the stronger result holds, namely: uniqueness of the inverse optimal potential $\hat{V}$ and uniqueness of the solution of the corresponding nonlinear equation \eqref{eq:Nonl}   are satisfied. Moreover,   the constant $\sigma_1$ in the corresponding equation \eqref{eq:Nonl}  is exactly determined. 

The one-parameter inverse optimal spectral problem $(P)$ for the $N$ -dimensional space Schr\"odinger equation can be solved when $m=1$, that is in the case of pre determinedness of the principal eigenvalue $E_1$ (see \cite{ilVal}). In this case, the key formula \eqref{eq:Val} is still valid, i.e.  $E_1(\cdot): L^2(\Omega) \to \mathbb{R}$  is continuously
differentiable and its  Fr\'echet-derivative is given by \eqref{eq:Val}.

The dual to one-parameter inverse optimal spectral problem (P), i.e. finding the maximal value of the first eigenvalue $E_1$ with the prescribed value of the norm in $L^p$  of the potential $V$, has been considered in \cite{Zhang}, where an equation similar to \eqref{eq:Nonl} has been obtained as well. 


Apparently, there should be no difficulties in generalizing Theorem \ref{thm1} to the cases of the Schr\"odinger equation considered on the whole interval $(-\infty,+\infty)$ or with other types of boundary conditions.

We conjecture that the solution $\hat{V}$ to the problem (P) is unique and, as in the case of one-parameter inverse optimal spectral problem (see \cite{ilValMatZam, ilVal, ilValUMJ}), the values of constants $ (\sigma_i)_{i=1}^m $ in equation \eqref{eq:Nonl} are uniquely determined. In this regards, 
notice that the relationship between problems (P) and \eqref{eq:Nonl} gives rise to another rather interesting problem on the uniqueness of solutions of nonlinear differential equations.

\section{Appendix}
 Let $\phi_i$,  $i=1,...,m$ be eigenfunctions of the operator $H_V$. We show that the system of functions
$\left\lbrace \phi^2_i(x) \right\rbrace_{i=1}^{m}$ 
   is linearly independent in $[0,L]$. 
The proof is by induction on $m$. Assume the statement is true for $m-1$; we will prove it  for $m$. 
  
 To obtain a contradiction suppose that   
\begin{equation}\label{eq1:u_k}
\sum_{j=1}^m \alpha_k \phi_i^{2}(x)= 0,~~\forall x \in [0,L],
\end{equation}
for some $\alpha_1,\alpha_2,...,\alpha_m$ such that $\sum_{i=1}^m |\alpha_i| \neq 0$. 
Differentiating \eqref{eq1:u_k}  twice  we get
$$
 \sum_{i=1}^m \alpha_i \phi_i(x)\phi_i''(x)+ \sum_{j=1}^m \alpha_i (\phi_i'(x))^2 = 0
$$
Hence, using the equalities $H_V\phi_i=E_i\phi_i$,  $i=1,...,m$ and \eqref{eq1:u_k} we derive
 $\sum_{j=1}^m \alpha_i ((\phi_i'(x))^2-E_k \phi_i^2(x))= 0$. 
Now once more differentiating  we obtain 
$\sum_{j=1}^m \alpha_i (\phi_i''(x)\phi_i'(x)-E_k \phi_i(x)\phi_i'(x) )= 0$. 
 Hence by  $H_V\phi_i=E_i\phi_i$,  $i=1,...,m$ and \eqref{eq1:u_k} we derive
$\sum_{j=1}^m \alpha_i E_k \phi_i(x)\phi_i'(x) = 0$.
In view of the assumptions $\phi_i (0) = 0$,  $i=1,...,m$, this implies
$\sum_{j=1}^m \alpha_i E_k \phi_i^{2}(x)= 0 $. This and \eqref{eq1:u_k} yield that 
$ \sum_{j=1}^{m-1} \gamma_k \phi_i^{2}(x)= 0$ for some constants  $\gamma_1,\gamma_2,...,\gamma_{m-1}$. But this contradicts to the induction assumption.


\end{document}